\documentclass[12pt]{article}
\usepackage{amsmath, amsthm}\usepackage{enumerate}
\usepackage{amssymb}\usepackage{bbold}
\usepackage{color}
\newtheorem{theorem}{Theorem}
\newtheorem{example}{Example}[section]

\newtheorem{proposition}{Proposition}[section]
\newtheorem{lemma}{Lemma}[section]
\newtheorem{corollary}{Corollary}[section]

\newtheorem{definition}{Definition}[section]

\DeclareMathOperator{\convo}{\xrightarrow[]{\mathbb{o}}}
\DeclareMathOperator{\convuo}{\xrightarrow[]{\mathbb{uo}}}
\DeclareMathOperator{\convbbuo}{\xrightarrow[]{\mathbb{bbuo}}}
\DeclareMathOperator{\convw}{\xrightarrow[]{\mathbb{w}}}

\begin{document}
\title{{\bf Bibounded $uo$-convergence and $b$-property in vector lattices}}\maketitle\author{\centering{{Safak Alpay$^{1}$, Eduard Emelyanov$^{1}$, Svetlana Gorokhova $^{2}$\\ \small $1$ 
Middle East Technical University, Ankara, Turkey\\ \small $2$ Southern Mathematical Institute of the Russian Academy of Sciences, Vladikavkaz, Russia\abstract{We define bidual bounded $uo$-convergence in vector lattices and investigate relations between this convergence and $b$-property. We prove that for a regular Riesz dual system $\langle X,X^{\sim}\rangle$, $X$ has $b$-property if and only if the order convergence in $X$ agrees with  the order convergence in $X^{\sim\sim}$.}

{\bf{keywords:}} {\rm vector lattice, order dual, regular Riesz dual system, $b$-property, unbounded order convergence, Banach lattice}
\vspace{2mm}

{\bf MSC2020:} {\rm 46A40, 46B42}

\section{Introduction and preliminaries}

In the present paper, all vector lattices are supposed to be real and Archime\-dean. 
By $X^{\sim}$ we denote the order dual of a vector lattice $X$ and by $X^{\sim}_n$ 
its order continuous dual. A pair $\langle X,Y\rangle$ is called a {\em Riesz dual system} 
if $Y$ is an order ideal of $X^{\sim}$ separating points of $X$ \cite[Def.3.51]{AB2}. 
The natural duality in $\langle X,Y\rangle$ is $\langle x,y\rangle:=y(x)$.
For any Riesz dual system $\langle X,Y\rangle$, $X$ will be identified with its image
$\hat{X}\subseteq X^{\sim\sim}$ under the canonical embedding $x\to i(x)=\hat{x}$,
where  $\hat{x}(y):=y(x)$ for $y\in X^{\sim}$. For a Riesz dual system $\langle X,Y\rangle$, 
it is well known that $\hat{X}$ is a vector sublattice of $Y^{\sim}_n$ and hence of $Y^{\sim}$ 
(cf. \cite[p.173]{AB2}). 

\begin{definition}\label{regular Riesz dual system}
A  Riesz dual system $\langle X,Y\rangle$ is called regular if $\hat{X}$ is a regular sublattice of $Y^{\sim}$.
\end{definition}

The following proposition can be considered as a supplement to Theorem 3.54 of \cite{AB2}. 

\begin{proposition}\label{regular embedding}
For a Riesz dual system $\langle X,Y\rangle$, the following statements are equivalent.
\begin{enumerate}
\item[$i)$] \  $\langle X,Y\rangle$ is a regular Riesz dual system.
\item[$ii)$] \ $\hat{X}$ is a regular sublattice of $Y^{\sim}_n$.
\item[$iii)$] \  $Y\subseteq X^{\sim}_n$.
\item[$iv)$] \ $\hat{X}$ is an order dense sublattice of $Y^{\sim}_n$.
\end{enumerate}
\end{proposition}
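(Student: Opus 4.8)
The plan is to prove the circle of implications $(iv)\Rightarrow(ii)\Leftrightarrow(i)$, together with $(ii)\Rightarrow(iii)$ and $(iii)\Rightarrow(iv)$, working throughout inside the chain $\hat{X}\subseteq Y^{\sim}_n\subseteq Y^{\sim}$. I will use freely that $Y^{\sim}$ is Dedekind complete, that $Y^{\sim}_n$ is a band in $Y^{\sim}$ --- in particular an order ideal, and Dedekind complete in its own right --- that the canonical map $x\mapsto\hat{x}$ is a lattice isomorphism of $X$ onto $\hat{X}$ (so that $x_{\alpha}\downarrow 0$ in $X$ is the same as $\hat{x}_{\alpha}\downarrow 0$ in $\hat{X}$), and that a sublattice $Z$ of a vector lattice $W$ is regular exactly when $z_{\alpha}\downarrow 0$ in $Z$ forces $z_{\alpha}\downarrow 0$ in $W$.

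The equivalence $(i)\Leftrightarrow(ii)$ is purely order-theoretic. An order ideal is always a regular sublattice, so $Y^{\sim}_n$ is regular in $Y^{\sim}$. Moreover regularity is transitive, and it is inherited by intermediate sublattices: for $A\subseteq B\subseteq C$, if $A$ is regular in $C$ then $A$ is regular in $B$ (a positive lower bound of a net in $B$ is also one in $C$), and if $A$ is regular in $B$ and $B$ is regular in $C$ then $A$ is regular in $C$. Taking $A=\hat{X}$, $B=Y^{\sim}_n$, $C=Y^{\sim}$ yields $(i)\Leftrightarrow(ii)$. Also $(iv)\Rightarrow(ii)$, since every order dense sublattice is regular.

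For $(ii)\Rightarrow(iii)$, fix $0\le y\in Y$ and a net $x_{\alpha}\downarrow 0$ in $X$. Then $\hat{x}_{\alpha}\downarrow 0$ in $\hat{X}$; by $(ii)$ this gives $\hat{x}_{\alpha}\downarrow 0$ in $Y^{\sim}_n$, and since $Y^{\sim}_n$ is regular in $Y^{\sim}$ also $\hat{x}_{\alpha}\downarrow 0$ in $Y^{\sim}$. As $Y^{\sim}$ is Dedekind complete, the Riesz--Kantorovich formulae give $0=\bigl(\inf_{\alpha}\hat{x}_{\alpha}\bigr)(y)=\inf_{\alpha}\hat{x}_{\alpha}(y)=\inf_{\alpha}y(x_{\alpha})$; since $\alpha\mapsto y(x_{\alpha})$ is decreasing, $y(x_{\alpha})\to 0$. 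Hence every positive element of $Y$ lies in $X^{\sim}_n$, and since $Y$ is a sublattice and $X^{\sim}_n$ is an ideal, decomposing $y=y^{+}-y^{-}$ gives $Y\subseteq X^{\sim}_n$, i.e.\ $(iii)$.

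It remains to prove $(iii)\Rightarrow(iv)$, which carries the real weight. Granting $Y\subseteq X^{\sim}_n$, one must show that $\hat{X}$ is order dense in $Y^{\sim}_n$; since $Y^{\sim}_n$ is Dedekind complete this amounts to showing that $\hat{X}$ has trivial disjoint complement in $Y^{\sim}_n$, equivalently, that for each $0<\varphi\in Y^{\sim}_n$ there is $0<x\in X$ with $\hat{x}\le\varphi$. This is precisely the content of \cite[Theorem~3.54]{AB2} (for a Riesz dual system $\langle X,Y\rangle$ with $Y\subseteq X^{\sim}_n$ the image $\hat{X}$ is order dense in $Y^{\sim}_n$), which I would simply invoke. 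An independent argument would fix $0\le y_{0}\in Y$ with $\varphi(y_{0})>0$, restrict attention to the order interval $[0,y_{0}]$ in $Y$, and use the Riesz--Kantorovich formula for $\varphi\wedge\hat{x}$ there to manufacture the dominated positive element; this construction, rather than any of the bookkeeping above, is the point where I expect the main difficulty, so relying on the cited theorem is the efficient route. Combining $(iv)\Rightarrow(ii)$, $(i)\Leftrightarrow(ii)$, $(ii)\Rightarrow(iii)$ and $(iii)\Rightarrow(iv)$ closes the equivalence $(i)\Leftrightarrow(ii)\Leftrightarrow(iii)\Leftrightarrow(iv)$.
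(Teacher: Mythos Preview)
Your proof is correct and follows essentially the same route as the paper: the same chain of implications, the same appeal to \cite[Thm.~3.54]{AB2} for $(iii)\Rightarrow(iv)$, the same ``order dense $\Rightarrow$ regular'' for $(iv)\Rightarrow(ii)$ (the paper cites \cite[Thm.~1.23]{AB1}), and the same transitivity-through-the-band argument for $(i)\Leftrightarrow(ii)$. The only cosmetic difference is in $(ii)\Rightarrow(iii)$: the paper stops at $\hat{x}_\alpha\downarrow 0$ in $Y^{\sim}_n$ and cites \cite[Thm.~1.67]{AB1} for $\hat{x}_\alpha(y)\to 0$, whereas you pass one step further to $Y^{\sim}$ and invoke the standard fact that a decreasing net $f_\alpha\downarrow 0$ in $Y^{\sim}$ satisfies $f_\alpha(y)\downarrow 0$ for every $y\in Y_+$ (your label ``Riesz--Kantorovich formulae'' is slightly loose---that name usually refers to finite lattice operations---but the fact itself is correct and routine).
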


\begin{proof}\
$i)\Longrightarrow ii)$: \ It follows from $\hat{X}\subseteq Y^{\sim}_n$ because $\hat{X}$ 
is a regular sublattice of $Y^{\sim}$ in view of $i)$.

$ii)\Longrightarrow iii)$: \ Let $x_\alpha\downarrow 0$ in $X$. Then $\hat{x}_\alpha\downarrow 0$ in $\hat{X}$,  and 
since $\hat{X}$ is a regular sublattice of $Y^{\sim}_n$ then also $\hat{x}_\alpha\downarrow 0$ in $Y^{\sim}_n$.
Hence $y(x_\alpha)=\hat{x}_\alpha(y)\to 0$ for all $y\in Y$ e.g. by \cite[Thm.1.67]{AB1}. 
It follows that each $y\in Y$ is order continuous, as desired. 

$iii)\Longleftrightarrow iv)$ \ is contained in Theorem 3.54 of \cite{AB2}.

$iv)\Longrightarrow ii)$ \ is Theorem 1.23 of \cite{AB1}.

$ii)\Longrightarrow i)$: \ Since $\hat{X}$ is a regular sublattice of $Y^{\sim}_n$ and $Y^{\sim}_n$, being a band in $Y^{\sim}$, is a regular sublattice of $Y^{\sim}$
then $\hat{X}$ is a regular sublattice of $Y^{\sim}$. 
\end{proof}

\noindent
It is worth to mention that Lemma 3.2 of \cite{GLX} follows directly from the equivalence $iii)\Longleftrightarrow vi)$ of Proposition \ref{regular embedding}.

Let $X^{\sim}$ separate points of $X$. If $X^{\sim}_n=X^{\sim}$ then $\langle X,X^{\sim}\rangle$ is a {\em regular Riesz dual system}
and hence $\hat{X}$ is a regular sublattice of $X^{\sim\sim}$. The next fact follows now from Proposition \ref{regular embedding}

\begin{corollary}\label{regular embedding 2}
For a Riesz dual system $\langle X,X^{\sim}\rangle$ the following conditions are equivalent$:$
\begin{enumerate}
\item[$i)$] \  $X^{\sim}_n=X^{\sim}$$.$
\item[$ii)$] \ $\hat{X}$ is a regular sublattice of $X^{\sim\sim}$.
\end{enumerate}
\end{corollary}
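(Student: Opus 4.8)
The plan is to derive Corollary~\ref{regular embedding 2} directly from Proposition~\ref{regular embedding} by specializing $Y$ to $X^{\sim}$. The key observation is that when $Y = X^{\sim}$, condition $iii)$ of Proposition~\ref{regular embedding}, namely $Y \subseteq X^{\sim}_n$, becomes $X^{\sim} \subseteq X^{\sim}_n$, which (together with the trivial inclusion $X^{\sim}_n \subseteq X^{\sim}$) is exactly $X^{\sim}_n = X^{\sim}$. Meanwhile, condition $i)$ of Proposition~\ref{regular embedding} (that $\langle X, X^{\sim}\rangle$ is a regular Riesz dual system) is by Definition~\ref{regular Riesz dual system} precisely the assertion that $\hat{X}$ is a regular sublattice of $(X^{\sim})^{\sim} = X^{\sim\sim}$.

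So the proof is essentially one line: apply the equivalence $i) \Longleftrightarrow iii)$ of Proposition~\ref{regular embedding} with $Y = X^{\sim}$. First I would note that for $\langle X, X^{\sim}\rangle$ to be a Riesz dual system at all, one needs $X^{\sim}$ to separate the points of $X$; this is the standing hypothesis in the statement of the corollary ("For a Riesz dual system $\langle X, X^{\sim}\rangle$"), so it may be invoked freely. Then I would unwind the two conditions of the corollary into the corresponding conditions of the proposition as above, and conclude.

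There is essentially no obstacle here; the only thing to be careful about is the bookkeeping of which $\sim$ refers to which space — that is, confirming that $Y^{\sim}$ with $Y = X^{\sim}$ really is $X^{\sim\sim}$ as used elsewhere in the paper, and that the canonical embedding $\hat{X} \subseteq X^{\sim\sim}$ referred to in condition $ii)$ of the corollary is the same embedding $i \colon x \mapsto \hat{x}$ discussed in the preliminaries. Both are immediate from the definitions given. I would also remark, as the text preceding the corollary already does, that the implication "$X^{\sim}_n = X^{\sim}$ implies $\hat{X}$ regular in $X^{\sim\sim}$" was noted before the statement, so only the converse genuinely uses Proposition~\ref{regular embedding}; but it is cleanest simply to cite the biconditional $i) \Longleftrightarrow iii)$ once and be done.

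\begin{proof}
Since $\langle X,X^{\sim}\rangle$ is a Riesz dual system, $X^{\sim}$ separates the points of $X$, and Proposition~\ref{regular embedding} applies with $Y=X^{\sim}$. With this choice of $Y$ we have $Y^{\sim}=X^{\sim\sim}$, so condition $i)$ of Proposition~\ref{regular embedding} reads: $\hat{X}$ is a regular sublattice of $X^{\sim\sim}$, which is exactly condition $ii)$ of the present corollary. On the other hand, condition $iii)$ of Proposition~\ref{regular embedding} reads $X^{\sim}\subseteq X^{\sim}_n$; since always $X^{\sim}_n\subseteq X^{\sim}$, this is equivalent to $X^{\sim}_n=X^{\sim}$, which is condition $i)$ of the present corollary. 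The equivalence $i)\Longleftrightarrow iii)$ of Proposition~\ref{regular embedding} now yields the claim.
\end{proof}
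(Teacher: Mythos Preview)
Your proof is correct and is precisely the approach the paper takes: the corollary is stated there as an immediate consequence of Proposition~\ref{regular embedding} (specializing $Y=X^{\sim}$), with no further argument given. Your explicit identification of condition $i)$ of the proposition with condition $ii)$ of the corollary, and of condition $iii)$ of the proposition with condition $i)$ of the corollary, just spells out what the paper leaves implicit.
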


A net $x_\alpha$ in a vector lattice $X$ is unbounded order convergent (briefly, $uo$-convergent) to $x\in X$, 
whenever $|x_\alpha-x|\wedge u\convo 0$ for every $u\in X_+$. For any net $x_\alpha$ in $X$, we have:
\begin{enumerate}
\item[$(a)$] \ $x_\alpha\convuo 0$ iff $|x_\alpha|\convuo 0$;  
\item[$(b)$] \ $x_\alpha\convo 0$ iff $x_\alpha\convuo 0$ and $x_\alpha$ is eventually order bounded. 
\end{enumerate}
In particular, a functional $y\in X^{\sim}$ belongs to $X^{\sim}_n$ iff $y(x_\alpha)\to 0$ for any order bounded net $x_\alpha$ such that $x_\alpha\convuo 0$.

An important case of regular Riesz dual systems was introduced and investigated recently in \cite{GLX};
namely $\langle X,X^{\sim}_{uo}\rangle$, with $X^{\sim}_{uo}$ separating points of a normed lattice $X$. 
Any functional $y\in X^{\sim}$ which takes $uo$-null nets to null nets is a linear combination of the 
coordinate functionals of finitely many atoms of $X$; see, e.g. \cite[Prop.2.2]{GLX}. Therefore the usual 
way of defining the $uo$-dual of $X$ fails to be interesting. In order to make the definition meaningful,
for the case when $X$ is a normed lattice, Gao, Leung, and Xanthos set an additional condition on $uo$-null nets. 
Namely, they define $X^{\sim}_{uo}$ as the collection of all functionals from $X^{\sim}$ taking norm bounded $uo$-null 
nets to null nets \cite[Def.2.1]{GLX}. 

In the case of an arbitrary vector lattice, the first candidate for such an additional condition, the {\em eventually order 
boundedness} of $uo$-convergent nets fails again because it turns $uo$--convergent nets to just $o$--convergent nets. 

Assuming $X^{\sim}$ separates the points of $X$, we investigate another additional condition, namely the 
{\em eventually order boundedness of $uo$-null nets in $X^{\sim\sim}$}. Recall that a subset $A$ of $X$ is called {\em $b$-order bounded} 
whenever $\hat{A}$ is order bounded in $X^{\sim\sim}$ \cite[Def.1.1]{AAT1}; $X$ has {\em $b$-property}, whenever every $b$-order bounded 
subset of $X$ is order bounded \cite[Def.1.1]{AAT1}.

\begin{definition}\label{bbuo-convergence}
Let $X^{\sim}$ separate  points of $X$.  A net $x_\alpha$ in $X$ is called $bbuo$-convergent to $x\in X$ if 
$x_\alpha\convuo x$ and the net $\hat{x}_\alpha$ is eventually order bounded in $X^{\sim\sim}$.
\end{definition}

Note that, in $C[0,1]$, the norm bounded $uo$-convergence, $bbuo$-convergence, and $o$-convergence agree.
In particular, the $bbuo$-convergence is not topological \cite[Thm.2]{G} (see also \cite[Thm.2.2]{DEM}). 

Clearly, any $o$-convergent net is $bbuo$-convergent and, by Lemma \ref{b-property 1}, in the case when $X$ has $b$-property, 
$bbuo$-convergence agrees with $o$-conver\-gence. Since every order dual vector lattice $X=Y^{\sim}$ has $b$-property,
replacement of eventually $b$-order boundedness by eventually order boundedness in $2n$-th order dual of $X$ for some $n\in\mathbb{N}$ 
leads to the same notion as the eventually $b$-order boundedness (the case $n=1$).

In the present paper we investigate relations between $bbuo$-convergence and $b$-property in vector and Banach lattices.
For further unexplained terminology and notations we refer to \cite{AB1,AB2,AAT2,GTX}.

\section{$bbuo$-Convergence in vector lattices}

In this section, we assume that $X^{\sim}$ separates points of the vector lattice $X$, so that $\langle X,X^{\sim}\rangle$ is a Riesz dual system.
We begin with the following two lemmas.

\begin{lemma}\label{any positive is o-cont}
For a Riesz dual system $\langle X,X^{\sim}\rangle$ the following conditions are equivalent$:$
\begin{enumerate}
\item[$i)$] \  $\hat{X}$ is a regular sublattice of $X^{\sim\sim}$.
\item[$ii)$] \  $x_\alpha\convbbuo x$ implies $\hat{x}_\alpha\convo \hat{x}$ in $X^{\sim\sim}$ for every net $x_\alpha$ in $X$ and $x\in X$.
\item[$iii)$] \  $X^{\sim}_n=X^{\sim}$.
\end{enumerate}
\end{lemma}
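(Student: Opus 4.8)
The plan is to lean on Corollary~\ref{regular embedding 2}, which already records $i)\Longleftrightarrow iii)$, so that it remains only to splice $ii)$ into the equivalence; I would close the cycle $i)\Longrightarrow ii)\Longrightarrow iii)\Longrightarrow i)$, the last arrow being Corollary~\ref{regular embedding 2}. For $i)\Longrightarrow ii)$, assume $\hat X$ is a regular sublattice of $X^{\sim\sim}$ and let $x_\alpha\convbbuo x$. By Definition~\ref{bbuo-convergence} this says $x_\alpha-x\convuo 0$ in $X$ and $(\hat x_\alpha)$ is eventually order bounded in $X^{\sim\sim}$; since $i$ is a lattice isomorphism onto $\hat X$, also $\hat x_\alpha-\hat x\convuo 0$ in $\hat X$. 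Because $uo$-convergence of a net passes from a regular sublattice up to the ambient vector lattice (see \cite{GTX}), we get $\hat x_\alpha-\hat x\convuo 0$ in $X^{\sim\sim}$; being eventually order bounded there, property $(b)$ of $uo$-convergence forces $\hat x_\alpha-\hat x\convo 0$, i.e.\ $\hat x_\alpha\convo\hat x$ in $X^{\sim\sim}$.

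For $ii)\Longrightarrow iii)$, fix $y\in X^{\sim}$ and a net $x_\alpha\downarrow 0$ in $X$; the goal is $y(x_\alpha)\to 0$. From $x_\alpha\downarrow 0$ we have $x_\alpha\convuo 0$ in $X$, and $(x_\alpha)$ is order bounded, so $(\hat x_\alpha)$ is order bounded in $X^{\sim\sim}$ ($i$ being a lattice homomorphism); hence $x_\alpha\convbbuo 0$ and, by $ii)$, $\hat x_\alpha\convo 0$ in $X^{\sim\sim}$. The evaluation functional $\hat y$ on $X^{\sim\sim}$ is order continuous, because the canonical image of a vector lattice in its second order dual consists of order continuous functionals, $\widehat{X^{\sim}}\subseteq (X^{\sim\sim})^{\sim}_n$ (cf.\ \cite[p.173]{AB2}); since the modulus $|\hat y|$ is then order continuous and $|\hat y(\hat x_\alpha)|\le|\hat y|(|\hat x_\alpha|)$, the $o$-null net $(\hat x_\alpha)$ is sent to a null net, so $y(x_\alpha)=\hat y(\hat x_\alpha)\to 0$. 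Thus every $y\in X^{\sim}$ is order continuous, i.e.\ $X^{\sim}_n=X^{\sim}$; and $iii)\Longrightarrow i)$ is Corollary~\ref{regular embedding 2}, closing the cycle.

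The step I expect to be most delicate is $i)\Longrightarrow ii)$: one must be sure that $uo$-convergence really transfers from $\hat X$ to $X^{\sim\sim}$ — this is precisely where regularity of $\hat X$ is used and cannot be omitted — and that, once there, eventual order boundedness upgrades $uo$-convergence to $o$-convergence via property $(b)$. A smaller point to watch in $ii)\Longrightarrow iii)$ is that it is genuine $o$-convergence of $(\hat x_\alpha)$, not merely monotone convergence, that must be preserved by $\hat y$; this is handled by order continuity of the modulus $|\hat y|$ together with the domination noted above.
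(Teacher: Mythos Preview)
Your argument is correct. The implication $i)\Rightarrow ii)$ is handled exactly as in the paper: regularity of $\hat X$ in $X^{\sim\sim}$ transfers the $uo$-convergence upward via \cite[Thm.3.2]{GTX}, and eventual order boundedness then upgrades $uo$- to $o$-convergence.

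The only difference is in closing the cycle. The paper proves $ii)\Rightarrow i)$ directly: from $x_\alpha\downarrow 0$ in $X$ one gets $x_\alpha\convbbuo 0$, hence $\hat x_\alpha\convo 0$ in $X^{\sim\sim}$ by $ii)$; since $\hat x_\alpha$ is decreasing, this forces $\hat x_\alpha\downarrow 0$ in $X^{\sim\sim}$, which is precisely regularity of $\hat X$ (invoking \cite[Lem.2.5]{GTX}). You instead route through $iii)$, using the additional fact that the evaluation map embeds $X^\sim$ into $(X^{\sim\sim})^\sim_n$, so that $\hat y$ is order continuous and $y(x_\alpha)=\hat y(\hat x_\alpha)\to 0$. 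Both are valid; the paper's path is slightly shorter and avoids the extra citation, while yours makes the equivalence $ii)\Leftrightarrow iii)$ more self-contained without passing back through $i)$. Incidentally, your detour through the modulus $|\hat y|$ is unnecessary: once $\hat y$ is order continuous, $\hat x_\alpha\convo 0$ already gives $\hat y(\hat x_\alpha)\to 0$ directly.
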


\begin{proof}\
$i)\Longrightarrow ii)$: \  Let $x_\alpha\convbbuo x$. Then $x_\alpha\convuo x$ in $X$ and hence $\hat{x}_\alpha\convuo \hat{x}$ in $X^{\sim\sim}$ \cite[Thm.3.2]{GTX}. 
Since the net $\hat{x}_\alpha$ is eventually order bounded in $X^{\sim\sim}$, $\hat{x}_\alpha\convo \hat{x}$ in $X^{\sim\sim}$

$ii)\Longrightarrow i)$: \ Let $x_\alpha\downarrow 0$ in $X$. Then $x_\alpha\convbbuo 0$ and, by the assumption, $\hat{x}_\alpha\convo 0$ in $X^{\sim\sim}$.
Hence $\hat{x}_\alpha\downarrow 0$ in $X^{\sim\sim}$. By Lemma 2.5 of \cite{GTX}, $\hat{X}$ is a regular sublattice of $X^{\sim\sim}$.

$i)\Longleftrightarrow iii)$ is Corollary \ref{regular embedding 2}
\end{proof}

\begin{lemma}\label{b-property 1}
Let $x_\alpha$ be a net in a vector lattice $X$ possessing $b$-property, $x\in X$. Then $x_\alpha\convbbuo x$ iff $x_\alpha\convo x$.
\end{lemma}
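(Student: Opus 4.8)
The plan is to prove both directions, with the nontrivial content in the forward implication. The reverse implication is immediate: if $x_\alpha \convo x$ then $x_\alpha \convuo x$ by property $(b)$ of $uo$-convergence, and an order convergent net is eventually order bounded in $X$, so its image under the canonical embedding $i$ is eventually order bounded in $X^{\sim\sim}$ (the embedding is a lattice homomorphism, hence maps order bounded sets to order bounded sets); thus $x_\alpha \convbbuo x$. This direction does not even use $b$-property.

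For the forward direction, suppose $x_\alpha \convbbuo x$. By Definition \ref{bbuo-convergence} this means $x_\alpha \convuo x$ in $X$ and $\hat{x}_\alpha$ is eventually order bounded in $X^{\sim\sim}$. The key step is to transfer the latter boundedness back down to $X$: eventually order boundedness of $\hat{x}_\alpha$ in $X^{\sim\sim}$ says that there is an index $\alpha_0$ and some $w \in X^{\sim\sim}_+$ with $|\hat{x}_\alpha| \le w$ for all $\alpha \ge \alpha_0$; equivalently, the tail set $A = \{x_\alpha - x : \alpha \ge \alpha_0\}$ (or $\{x_\alpha : \alpha \ge \alpha_0\}$) has order bounded image $\hat{A}$ in $X^{\sim\sim}$, i.e. $A$ is $b$-order bounded in the sense recalled before Definition \ref{bbuo-convergence}. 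Now invoke the $b$-property of $X$: every $b$-order bounded subset of $X$ is order bounded, so $A$ is order bounded in $X$, which is exactly to say that the net $x_\alpha$ is eventually order bounded in $X$. Combining eventual order boundedness with $x_\alpha \convuo x$ and applying property $(b)$ of $uo$-convergence gives $x_\alpha \convo x$.

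The main (minor) obstacle is the bookkeeping around ``eventually'': one must be careful that eventual order boundedness of $\hat{x}_\alpha$ yields a genuine tail whose image is order bounded, and that $b$-order boundedness is closed under translation by the fixed element $\hat{x}$ so that the definition applies to $\{x_\alpha\}$ on a tail rather than to $\{x_\alpha - x\}$; both are routine since $X^{\sim\sim}$ is a vector lattice and translation preserves order boundedness. No regularity hypothesis on the dual system is needed here — unlike in Lemma \ref{any positive is o-cont}, where regularity was essential — because $b$-property lets us pull boundedness back to $X$ directly rather than relying on agreement of order convergences between $X$ and $X^{\sim\sim}$.
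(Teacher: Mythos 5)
Your proof is correct and follows essentially the same route as the paper's: use the $b$-property to pull the eventual order boundedness of $\hat{x}_\alpha$ in $X^{\sim\sim}$ down to eventual order boundedness of a tail of $x_\alpha$ in $X$, then combine with $uo$-convergence via property $(b)$ to get $o$-convergence (the paper just reduces to $x=0$ first rather than tracking the translation explicitly). Your added remarks on the trivial reverse direction and on not needing regularity are accurate.
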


\begin{proof}\
It suffices to show that $x_\alpha\convbbuo 0$ implies $x_\alpha\convo 0$. Let $x_\alpha\convbbuo 0$ in $X$. 
Hence $|\hat{x}_\alpha|\le u\in X^{\sim\sim}$ for all $\alpha\ge\alpha_0$. Since $X$ has $b$-property, we may assume  $u=\hat{w}\in \hat{X}$. 
So, $x_\alpha\convuo 0$ in $X$ and $|x_\alpha|\le w\in X$ for all $\alpha\ge\alpha_0$. Thus $x_\alpha\convo 0$ in $X$.
\end{proof}

We define the $bbuo$-dual by $X^{\sim}_{bbuo}:=\{y\in X^{\sim}|x_\alpha\convbbuo 0\Rightarrow y(x_\alpha)\to 0\}$.
Since $x_\alpha\convo 0\Rightarrow x_\alpha\convbbuo 0$ then $X^{\sim}_{bbuo}\subseteq X^{\sim}_n$. Clearly, $X^{\sim}_{bbuo}$ is an order ideal in $X^{\sim}_n$
and hence in $X^{\sim}$. Furthermore, in the case of a normed lattice $X$, both $X^{\sim}_{uo}$ and $X^{\sim}_{bbuo}$ are clearly norm closed ideals in $X^{\sim\sim}$, 
and $X^{\sim}_{uo}\subseteq X^{\sim}_{bbuo}$. We include several simple examples. 

\begin{example}\label{duals}
\begin{enumerate}
\item[$(a)$] \ $(c_0)^{\sim}_{uo}=\ell^1$, $(\ell^1)^{\sim}_{uo}=c_0$, and $(\ell^\infty)^{\sim}_{uo}=\ell^1$ $\cite[Ex.2.4]{GLX}$$;$ 
therefore $((c_0)^{\sim}_{uo})^{\sim}_{uo}=c_0$, $((\ell^\infty)^{\sim}_{uo})^{\sim}_{uo}=c_0$, and $((\ell^1)^{\sim}_{uo})^{\sim}_{uo}=\ell^1$. 
\item[$(b)$] \ $(c_0)^{\sim}_{bbuo}=\ell^1$, $(\ell^1)^{\sim}_{bbuo}=\ell^\infty$, and $(\ell^\infty)^{\sim}_{bbuo}=\ell^1$$;$ therefore
$((\ell^\infty)^{\sim}_{bbuo})^{\sim}_{bbuo}=\ell^\infty$, $((c_0)^{\sim}_{bbuo})^{\sim}_{bbuo}=\ell^\infty$, and  $((\ell^1)^{\sim}_{bbuo})^{\sim}_{bbuo}=\ell^1$.
\item[$(c)$] \ Let $X$ be an atomic universally complete vector lattice, without lost of generality $X=s(\Omega)$ the space of real-valued functions on a set $\Omega$. 
Then $X^{\sim}_{bbuo}=X^{\sim}_n=X^{\sim}=c_{00}(\Omega)$ the space of all real-valued functions on $\Omega$ with finite support, and   
$c_{00}(\Omega)^{\sim}_{uo}=c_{00}(\Omega)^{\sim}_{bbuo}=c_{00}(\Omega)^{\sim}_n=s(\Omega)$. Therefore $(X^{\sim}_{bbuo})^{\sim}_{bbuo}=X$ and 
$((c_{00}(\Omega)^{\sim}_{bbuo})^{\sim}_{bbuo}=c_{00}(\Omega)$.
\item[$(d)$] \ Let $(\Omega,\Sigma,P)$ be a non-atomic probability space, $1\le p\le \infty$, and $L^p:= L^p(\Omega,\Sigma,P)$.
Then $(L^p)^{\sim}_{uo}=L^q$ for $1<p\le\infty$, $q^{-1}+p^{-1}=1$$;$ and $(L^1)^{\sim}_{uo}=\{0\}$ $\cite{GLX}$. On the other hand, 
$(L^p)^{\sim}_{bbuo}=L^q$ for all $1\le p\le\infty$.
\end{enumerate}
\end{example}

The following result states that $X^{\sim}_{bbuo}$ indeed coincides with $X^{\sim}_n$. In particular, 
the duality theory for $bbuo$-convergence is already well presented in the literature. 

\begin{theorem}\label{b-property 3}
Let $X^{\sim}_n$ separate  points of $X$. Then $X^{\sim}_{bbuo}=X^{\sim}_{n}$.
\end{theorem}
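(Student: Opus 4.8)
The plan is to prove the two inclusions $X^{\sim}_{bbuo}\subseteq X^{\sim}_{n}$ and $X^{\sim}_{n}\subseteq X^{\sim}_{bbuo}$ separately. The first inclusion is already observed in the text: since every $o$-null net is $bbuo$-null, any functional killing $bbuo$-null nets kills $o$-null nets, hence lies in $X^{\sim}_n$. So the whole content is the reverse inclusion: every order continuous functional takes $bbuo$-null nets to null nets.

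For that, I would fix $y\in X^{\sim}_n$ and a net $x_\alpha\convbbuo 0$ in $X$, and show $y(x_\alpha)\to 0$. By definition of $bbuo$-convergence, $x_\alpha\convuo 0$ in $X$ and $\hat{x}_\alpha$ is eventually order bounded in $X^{\sim\sim}$, say $|\hat{x}_\alpha|\le u\in (X^{\sim\sim})_+$ for all $\alpha\ge\alpha_0$. The key point is that $y$, being order continuous on $X$, is exactly the kind of functional that is insensitive to the difference between order boundedness in $X$ and order boundedness in $X^{\sim\sim}$. Concretely, I would pass to the functional $\hat{x}\mapsto$ evaluation against $y$: note $y\in X^{\sim}_n$ means $y\in (X^{\sim})^{\sim}_n$-action is compatible, i.e. the band generated appropriately. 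More usefully, consider the restriction: by \cite[Thm.3.2]{GTX}, $x_\alpha\convuo 0$ in $X$ implies $\hat{x}_\alpha\convuo 0$ in $X^{\sim\sim}$; combined with eventual order boundedness of $\hat{x}_\alpha$ in $X^{\sim\sim}$ and property $(b)$ of $uo$-convergence, this gives $\hat{x}_\alpha\convo 0$ in $X^{\sim\sim}$. Now $y$ sits in $X^{\sim}=(X^{\sim\sim})^{\sim}$-predual position: evaluation at $y$, i.e. the map $z\mapsto z(y)$ for $z\in X^{\sim\sim}$, is an order continuous functional on $X^{\sim\sim}$ precisely because... this is where care is needed.

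The main obstacle is justifying that $z\mapsto z(y)$ is order continuous on $X^{\sim\sim}$, equivalently that $y$, viewed inside $X^{\sim\sim\sim}$ via the canonical embedding of $X^{\sim}$, lies in $(X^{\sim\sim})^{\sim}_n$. This is a standard fact: for any vector lattice $E$, the canonical image of $E^{\sim}$ in $E^{\sim\sim\sim}$ actually lands in $(E^{\sim\sim})^{\sim}_n$, because $E^{\sim\sim}$ carries $E^{\sim}$ as a norming/order-continuous-acting predual — more precisely, if $z_\beta\downarrow 0$ in $X^{\sim\sim}$ then $z_\beta(y)\downarrow$ to $\inf$, and one shows the infimum is $0$ using that $X^{\sim\sim}=(X^{\sim})^{\sim}$ and $y\in X^{\sim}$; the relevant statement is \cite[Thm.1.70]{AB1} or the observation that every $y\in E^{\sim}$ acts order continuously on $E^{\sim\sim}$ since $\widehat{E^{\sim}}\subseteq (E^{\sim\sim})^{\sim}_n$. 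Granting this, $\hat{x}_\alpha\convo 0$ in $X^{\sim\sim}$ yields $y(x_\alpha)=\hat{x}_\alpha(y)\to 0$, which is exactly what we need.

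So the proof structure is: $(1)$ recall $X^{\sim}_{bbuo}\subseteq X^{\sim}_n$; $(2)$ fix $y\in X^{\sim}_n$, $x_\alpha\convbbuo 0$; $(3)$ apply \cite[Thm.3.2]{GTX} to get $\hat{x}_\alpha\convuo 0$ in $X^{\sim\sim}$; $(4)$ use eventual order boundedness plus property $(b)$ to upgrade to $\hat{x}_\alpha\convo 0$ in $X^{\sim\sim}$; $(5)$ invoke that $y\in X^{\sim}$ acts order-continuously on $X^{\sim\sim}$ (via $\widehat{X^{\sim}}\subseteq(X^{\sim\sim})^{\sim}_n$, e.g. \cite[Thm.1.70]{AB1}) to conclude $y(x_\alpha)\to 0$, hence $y\in X^{\sim}_{bbuo}$. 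Note the hypothesis that $X^{\sim}_n$ separates points is used only to ensure $\langle X,X^{\sim}\rangle$ (hence $\hat{x}_\alpha$ and the $bbuo$-convergence) is well defined and that the canonical embedding is injective; it is not otherwise needed in step $(5)$, since there the order continuity of evaluation at $y$ is automatic for every $y\in X^{\sim}$.
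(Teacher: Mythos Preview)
There is a genuine gap in step~(3). To invoke \cite[Thm.~3.2]{GTX} and pass from $x_\alpha\convuo 0$ in $X$ to $\hat{x}_\alpha\convuo 0$ in $X^{\sim\sim}$, you need $\hat{X}$ to be a \emph{regular} sublattice of $X^{\sim\sim}$. But by Corollary~\ref{regular embedding 2} this is equivalent to $X^{\sim}_n=X^{\sim}$, which is strictly stronger than the hypothesis that $X^{\sim}_n$ separates points (e.g.\ $X=c$, the space of convergent sequences, has $X^{\sim}_n=\ell^1$ separating points while the limit functional lies in $X^{\sim}\setminus X^{\sim}_n$). In fact Lemma~\ref{any positive is o-cont} says precisely that the implication you want in steps~(3)--(4), namely ``$x_\alpha\convbbuo 0\Rightarrow \hat{x}_\alpha\convo 0$ in $X^{\sim\sim}$'', is \emph{equivalent} to $X^{\sim}_n=X^{\sim}$. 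So the argument as written proves the theorem only under that extra assumption, and your closing remark that the separating hypothesis on $X^{\sim}_n$ is used ``only to ensure well-definedness'' is incorrect.

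The repair, which is what the paper does, is to run the entire argument in $(X^{\sim}_n)^{\sim}$ rather than in $X^{\sim\sim}$. Since $X^{\sim}_n$ separates points, $\langle X,X^{\sim}_n\rangle$ is a Riesz dual system, and Proposition~\ref{regular embedding} (with $Y=X^{\sim}_n$, so condition~$(iii)$ is automatic) gives that $\hat{X}$ is regular in $(X^{\sim}_n)^{\sim}$. Now \cite[Thm.~3.2]{GTX} yields $\hat{x}_\alpha\convuo 0$ in $(X^{\sim}_n)^{\sim}$; the bound $z\in X^{\sim\sim}$ restricts to an order bound in $(X^{\sim}_n)^{\sim}$, so $\hat{x}_\alpha\convo 0$ there; and finally $y\in X^{\sim}_n$ embeds into $((X^{\sim}_n)^{\sim})^{\sim}_n$, so evaluation at $y$ is order continuous on $(X^{\sim}_n)^{\sim}$ and $y(x_\alpha)=\hat{x}_\alpha(y)\to 0$. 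This is exactly where the hypothesis on $X^{\sim}_n$ (rather than merely on $X^{\sim}$) does real work.
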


\begin{proof}\
It is enough to prove that $X^{\sim}_{n}\subseteq X^{\sim}_{bbuo}$.
Let $X\ni x_\alpha\convbbuo 0$ and $y\in X^{\sim}_n$. We have to show $y(x_\alpha)\to 0$.
Without lost of generality, we assume $x_\alpha\ge 0$ for all $\alpha$. By Proposition \ref{regular embedding}, 
$\hat{X}$ is a regular sublattice of $(X^{\sim}_n)^{\sim}_n$ and hence of $(X^{\sim}_n)^{\sim}$.
Since $\hat{x}_\alpha\convuo 0$ in $\hat{X}$ then $\hat{x}_\alpha\convuo 0$ in $(X^{\sim}_n)^{\sim}$ 
by Theorem 3.2 of \cite{GTX}. Take $z\in X^{\sim\sim}$ with $0\le\hat{x}_\alpha\le z$ for all $\alpha$. 
Denoting by the same letters $\hat{x}_\alpha$ and $z$ their restrictions to $X^{\sim}_n$, 
gives $0\le\hat{x}_\alpha\le z$ in $(X^{\sim}_n)^{\sim}$ for all $\alpha$. 
So, the net $\hat{x}_\alpha$ is order bounded in $(X^{\sim}_n)^{\sim}$ and since $\hat{x}_\alpha\convuo 0$ 
in $(X^{\sim}_n)^{\sim}$ then $\hat{x}_\alpha\convo 0$ in $(X^{\sim}_n)^{\sim}$.
Since $\hat{y}\in ((X^{\sim}_n)^{\sim})^{\sim}_{n}$, then $\hat{y}(\hat{x}_\alpha)\to 0$ and hence
$y({x}_\alpha)=\hat{x}_\alpha(y)=\hat{y}(\hat{x}_\alpha)\to 0$ as desired.
\end{proof}

\noindent
We do not know whether the statement of Theorem \ref{b-property 3} still holds true under the weaker condition for $X^{\sim}$ to separate points of $X$. 

Recall that a vector lattice $X$ is said to be {\em perfect} if $\langle X,X^{\sim}_n\rangle$ is a regular Riesz dual system and 
$\hat{X}=(X^{\sim}_n)^{\sim}$. By the Nakano theorem \cite[Thm.3.18]{AB1}, the order dual $X^{\sim}$ of any vector lattice is perfect. 
By Theorem \ref{b-property 3}, $X^{\sim}_{bbuo}=X^{\sim}_{n}$ and hence $X^{\sim}_{bbuo}$ is also perfect as a projection band in $X^{\sim}$. 

\begin{lemma}\label{b-Cauchy}
Let $\langle X,X^{\sim}\rangle$ be a Riesz dual system such that for every net $x_\alpha$ in $X$$:$ $\hat{x}_\alpha\convo 0$ in $X^{\sim\sim}$ implies $x_\alpha\convo 0$ in $X$.
If $y_\alpha$ is a net in $X$ such that $\hat{y}_\alpha\convo z$ in $X^{\sim\sim}$ then $y_\alpha$ is Cauchy in $X$.
\end{lemma}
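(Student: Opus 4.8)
The plan is to reduce the statement to the standing hypothesis by passing to the double net of differences. Recall that a net $(y_\alpha)_{\alpha\in A}$ in $X$ is order Cauchy exactly when the net $(y_\alpha-y_\beta)_{(\alpha,\beta)\in A\times A}$, indexed by the product directed set, order converges to $0$ in $X$. Since the canonical embedding $x\mapsto\hat x$ is linear, $\widehat{y_\alpha-y_\beta}=\hat y_\alpha-\hat y_\beta$, so it suffices to verify that this double net is a legitimate input to the hypothesis, i.e.\ that $\hat y_\alpha-\hat y_\beta\convo 0$ in $X^{\sim\sim}$ along $A\times A$; the conclusion $y_\alpha-y_\beta\convo 0$ in $X$ is then precisely the assertion that $(y_\alpha)$ is order Cauchy.

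To obtain that input, I would first unwind $\hat y_\alpha\convo z$: there is a net $p_\gamma\downarrow 0$ in $X^{\sim\sim}$ such that for each $\gamma$ one has $|\hat y_\alpha-z|\le p_\gamma$ for all $\alpha\ge\alpha_\gamma$. Then for every pair $(\alpha,\beta)$ with $\alpha,\beta\ge\alpha_\gamma$ the triangle inequality gives $|\hat y_\alpha-\hat y_\beta|\le|\hat y_\alpha-z|+|\hat y_\beta-z|\le 2p_\gamma$, and since $2p_\gamma\downarrow 0$ this shows $\hat y_\alpha-\hat y_\beta\convo 0$ in $X^{\sim\sim}$ over $A\times A$. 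Equivalently, the image under $x\mapsto\hat x$ of the net $(y_\alpha-y_\beta)_{(\alpha,\beta)}$ in $X$ is order null in $X^{\sim\sim}$.

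Finally I would invoke the hypothesis, which is stated for an arbitrary net in $X$ and hence applies to the net indexed by $A\times A$: it yields $y_\alpha-y_\beta\convo 0$ in $X$, so $(y_\alpha)$ is order Cauchy. The only matters needing care are bookkeeping ones --- that $A\times A$ with the product order is admissible as an index set in the hypothesis, and the elementary step converting order convergence of $\hat y_\alpha$ into order convergence of the differences via $2p_\gamma\downarrow 0$ --- and I expect no genuine obstacle beyond these.
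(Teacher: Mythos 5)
Your argument is correct and is essentially the paper's own proof: both pass to the double net $(\hat y_{\alpha'}-\hat y_{\alpha''})$ over the product directed set, observe it is order null in $X^{\sim\sim}$ because $\hat y_\alpha\convo z$, and then apply the hypothesis to conclude the double net of differences is order null in $X$, i.e.\ $(y_\alpha)$ is order Cauchy. You merely make explicit the triangle-inequality step (with $2p_\gamma\downarrow 0$) that the paper leaves implicit.
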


\begin{proof}\
Let $y_\alpha$ be a net in $X$ satisfying $\hat{y}_\alpha\convo z$ in $X^{\sim\sim}$. Then $\hat{y}_\alpha$ is Cauchy in $X^{\sim\sim}$. Therefore 
the double net $\hat{y}_{\alpha'}-\hat{y}_{\alpha''}$ $o$-converges to 0 in $X^{\sim\sim}$. By the conditions of the lemma, 
$y_{\alpha'}-y_{\alpha''}\convo 0$ in $X$ as desired.
\end{proof}

The following result characterizes $b$-property in terms of $bbuo$-convergence.

\begin{theorem}\label{b-property}
For a regular Riesz dual system $\langle X,X^{\sim}\rangle$ the following conditions are equivalent$:$
\begin{enumerate}
\item[$i)$] \  $X$ has $b$-property.
\item[$ii)$] \  $x_\alpha\convbbuo 0$ implies $x_\alpha\convo 0$ for every net $x_\alpha$ in $X$$.$
\item[$iii)$] \  $\hat{x}_\alpha\convo 0$ in $X^{\sim\sim}$ implies $x_\alpha\convo 0$ in $X$ for every net $x_\alpha$ in $X$$.$
\item[$iv)$] \  $\hat{x}_\alpha\convo 0$ in $X^{\sim\sim}$ iff $x_\alpha\convo 0$ in $X$ for every net $x_\alpha$ in $X$$.$
\end{enumerate}
\end{theorem}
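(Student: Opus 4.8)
The plan is to establish the cycle $i)\Rightarrow iv)\Rightarrow iii)\Rightarrow ii)\Rightarrow i)$, leaning on the lemmas already available. The implication $iv)\Rightarrow iii)$ is trivial, and $ii)\Rightarrow i)$ is essentially the content I expect to require the most care, so I would organize the argument to isolate it. For $i)\Rightarrow iv)$: one direction, $x_\alpha\convo 0$ in $X$ implies $\hat x_\alpha\convo 0$ in $X^{\sim\sim}$, is automatic since $\hat X$ is a sublattice of $X^{\sim\sim}$ and the canonical embedding is a lattice homomorphism that preserves suprema of the relevant order-bounded decreasing nets; for the converse, if $\hat x_\alpha\convo 0$ in $X^{\sim\sim}$ then in particular $\hat x_\alpha$ is eventually order bounded in $X^{\sim\sim}$, so $x_\alpha\convbbuo 0$ (using that $o$-convergence implies $uo$-convergence together with regularity, or directly that $o$-convergence of $\hat x_\alpha$ forces $uo$-convergence of $\hat x_\alpha$, which pulls back to $uo$-convergence of $x_\alpha$ in $X$ by \cite[Thm.3.2]{GTX} in the regular setting), and then Lemma \ref{b-property 1} gives $x_\alpha\convo 0$. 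This also simultaneously yields $i)\Rightarrow ii)$, so really the only substantive new work is the closing implication.

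For $iii)\Rightarrow ii)$: if $x_\alpha\convbbuo 0$, then $x_\alpha\convuo 0$ in $X$ and, by regularity and \cite[Thm.3.2]{GTX}, $\hat x_\alpha\convuo 0$ in $X^{\sim\sim}$; combined with eventual order boundedness of $\hat x_\alpha$ in $X^{\sim\sim}$ this gives $\hat x_\alpha\convo 0$ in $X^{\sim\sim}$, whence $x_\alpha\convo 0$ by hypothesis $iii)$.

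The real obstacle is $ii)\Rightarrow i)$, i.e.\ deducing $b$-property from the order-convergence statement. Here I would argue by contraposition, or rather construct a suitable net witnessing $b$-order boundedness. Suppose $A\subseteq X_+$ is $b$-order bounded but not order bounded; fix $z\in X^{\sim\sim}_+$ with $\hat a\le z$ for all $a\in A$. Direct the set $\mathcal F$ of finite subsets of $A$ by inclusion, and for $F\in\mathcal F$ set $x_F:=\bigvee_{a\in F} a\in X$ (this supremum exists in $X$ since $X$ is a vector lattice and $F$ is finite). Then $\hat x_F=\bigvee_{a\in F}\hat a\le z$ in $X^{\sim\sim}$ for every $F$, so the increasing net $(\hat x_F)$ is order bounded in $X^{\sim\sim}$; being increasing and order bounded it $o$-converges in $X^{\sim\sim}$ (here I use that $X^{\sim\sim}$, as an order dual, is Dedekind complete), say $\hat x_F\uparrow w\le z$. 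Now consider the differences: for $F\subseteq F'$ one has $0\le \hat x_{F'}-\hat x_F\uparrow$ ... the point is to produce from $(x_F)$ a $uo$-null, $bbuo$-type net whose failure to $o$-converge in $X$ contradicts $ii)$. Concretely, if $(x_F)$ were order bounded in $X$ we would be done (then $A$ is order bounded), so assume not; then $(x_F)$ is an increasing net in $X$, bounded in $X^{\sim\sim}$ but not in $X$, and the tail differences $y_{F',F}:=x_{F'}-x_F$ form a net that is $uo$-null is false in general — so the cleaner route is: use Lemma \ref{b-Cauchy}. Indeed, $ii)$ gives $iii)$ (shown above), and $iii)$ is exactly the hypothesis of Lemma \ref{b-Cauchy}; since $\hat x_F\convo w$ in $X^{\sim\sim}$, Lemma \ref{b-Cauchy} shows $(x_F)$ is $o$-Cauchy in $X$, hence (as $X^{\sim\sim}$-convergence of $\hat x_F$ is genuine and the embedding is order continuous on $o$-convergent nets under regularity) its limit $x\in X$ satisfies $\hat x=w$, so $\hat a\le w=\hat x$ for all $a\in A$, i.e.\ $a\le x$ for all $a\in A$, and $A$ is order bounded by $x$. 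Wait — $o$-Cauchy does not by itself produce a limit unless $X$ is Dedekind complete; so the correct finish is: the net $(x_F)$ is increasing and $o$-Cauchy in $X$, therefore $o$-convergent in $X$ (an increasing $o$-Cauchy net in an Archimedean vector lattice converges to its supremum once that supremum is known to exist, which $o$-Cauchy increasing nets force), and its $o$-limit $x$ dominates every $a\in A$. Thus $A$ is order bounded, proving $i)$. The delicate point to get right is precisely this last existence-of-supremum step — that an increasing $o$-Cauchy net in $X$ has a supremum in $X$ — which I would handle by noting that $o$-Cauchyness of an increasing net already provides an order bound in $X$ (the "$o$-limit" of the difference net being squeezed), making $(x_F)$ order bounded above in $X$, and then Archimedean-ness is not even needed since we only need the bound, not completeness. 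That bound is the required element dominating $A$.
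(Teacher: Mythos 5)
Your argument is correct and follows essentially the same route as the paper: Lemma \ref{b-property 1} for the descent from $i)$, regularity plus \cite[Thm.3.2]{GTX} to pass $uo$-convergence between $X$ and $X^{\sim\sim}$, and, for the hard implication, the increasing net of finite suprema, Dedekind completeness of $X^{\sim\sim}$, Lemma \ref{b-Cauchy}, and the observation that an increasing $o$-Cauchy net is order bounded above by $x_{\alpha_{\gamma_0}}+y_{\gamma_0}$. The only slip is the parenthetical claim that $ii)\Rightarrow iii)$ was ``shown above'' (what you proved above is the converse $iii)\Rightarrow ii)$), but the needed step --- $\hat{x}_\alpha\convo 0$ in $X^{\sim\sim}$ forces $x_\alpha\convbbuo 0$, whence $ii)$ gives $x_\alpha\convo 0$ --- is already contained in your $i)\Rightarrow iv)$ paragraph and uses only regularity, so the cycle closes.
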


\begin{proof}\ 
$i)\Longrightarrow ii)$ \ follows from Lemma \ref{b-property 1}.

$ii)\Longrightarrow iii)$: \ If $\hat{x}_\alpha\convo 0$ in $X^{\sim\sim}$ then $\hat{x}_\alpha\convuo 0$ in $X^{\sim\sim}$ and, 
by regularity of $\hat{X}$ in $X^{\sim\sim}$, $x_\alpha\convuo 0$ in $X$. By the assumption, $x_\alpha$ is eventually order bounded 
in $X^{\sim\sim}$ and hence $ii)$ implies $x_\alpha\convo 0$ in $X$, as desired. 

$iii)\Longrightarrow i)$: \ Let $X_+\ni x_\alpha\uparrow$ and $\hat{x}_\alpha\le u\in X^{\sim\sim}$ for all $\alpha$. 
We need to show that, for some $x\in X$, there holds  $x_\alpha\le x$  for all $\alpha$.
Since  $X^{\sim\sim}$ is Dedekind complete, $\hat{X}_+\ni\hat{x}_\alpha\uparrow\le u\in X^{\sim\sim}$ implies 
$\hat{x}_\alpha\convo z$ in $X^{\sim\sim}$ for some $z\in X^{\sim\sim}$. By Lemma \ref{b-Cauchy}, $x_\alpha$ is a Cauchy net in $X$.
Then there exists a net $y_\gamma$ in $X$ with $y_\gamma\downarrow 0$ in $X$ such that for every $\gamma$ there exists  
$\alpha_{\gamma}$ satisfying $|x_{\alpha'}-x_{\alpha''}|\le y_\gamma$ whenever $\alpha',\alpha''\ge \alpha_{\gamma}$. 
Fix any $\gamma_0$ and take $\alpha_{\gamma_0}$ such that $|x_{\alpha'}-x_{\alpha''}|\le y_{\gamma_0}$ for all 
$\alpha',\alpha''\ge \alpha_{\gamma_0}$. In particular, $x_{\alpha}-x_{\alpha_{\gamma}}\le y_{\gamma_0}$ for all $\alpha\ge \alpha_{\gamma_0}$ and hence 
$x_{\alpha}\le x:=x_{\alpha_{\gamma_0}}+y_{\gamma_0}$ for all $\alpha$ as desired.

$iii)\Longrightarrow iv)$: \ In view of $iii)\Longleftrightarrow ii)$, we need to prove that $x_\alpha\convo 0$ in $X$ 
implies $\hat{x}_\alpha\convo 0$ in $X^{\sim\sim}$. This follows from  regularity of $\hat{X}$ in $X^{\sim\sim}$.

$iv)\Longrightarrow iii)$ \ is trivial.
\end{proof}

The condition that every disjoint sequence $x_n$ in $X$ which is order bounded in $X^{\sim\sim}$ is also order bounded in $X$
does not imply the $b$-property. To see this, consider the first example at page 2 in \cite{AAT2}, the Banach lattice $X=\ell^\infty_\omega(T)$ 
consisting of all countably supported real functions on an uncountable set $T$. Clearly $X$ failed to have $b$-property. However 
$X$ has the countable $b$-property in the sense of \cite[p.2]{AAT2}. In particular, every sequence $x_n$ in $X$ which is order bounded 
in $X^{\sim\sim}$ is also order bounded in $X$.

\section{$bbuo$-Convergence in Banach lattices}

In this section, we consider the Banach lattice case. We begin with the following characterization of $KB$-spaces, which
extends Proposition 2.1 of \cite{AAT1}, where the equivalence $2)\Longleftrightarrow 3)$ was proved. 

\begin{theorem}\label{KB-spaces}
Let $X$ be a Banach lattice with order continuous norm. The following conditions are equivalent.
\begin{enumerate}
\item[$1)$] \ $X$ is perfect.
\item[$2)$] \ $X$ is a $KB$-space. 
\item[$3)$] \ $X$ has $b$-property.
\item[$4)$] \ $|x_n|\ \underset{w}\convuo \ 0$ implies $\|x_n\|\to 0$ for every sequence $x_n$ in $X$.
\end{enumerate}
If $X^{\sim}_{uo}$ separates points of $X$, then the above conditions are also equivalent to the following$:$
\begin{enumerate}
\item[$5)$] \ $X=Y^*$ for some Banach lattice $Y$.
\end{enumerate}
\end{theorem}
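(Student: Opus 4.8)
The plan is to establish the equivalences in a cycle, drawing on the general vector-lattice results of Section 2 together with classical Banach-lattice theory. First I would recall that since $X$ has order continuous norm, $X^{\sim} = X^{\sim}_n$, so by Corollary \ref{regular embedding 2} the canonical embedding $\hat X \subseteq X^{\sim\sim}$ is regular; hence $\langle X, X^{\sim}\rangle = \langle X, X^*\rangle$ is a regular Riesz dual system and Theorem \ref{b-property} applies. The implication $2)\Longrightarrow 3)$ is essentially Proposition 2.1 of \cite{AAT1}: a $KB$-space is a band (in fact a projection band, and in the order continuous case all of) in its bidual, so order intervals of $X^{\sim\sim}$ that meet $\hat X$ come from order intervals of $X$. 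For $3)\Longrightarrow 1)$: $b$-property plus regularity of $\hat X$ in $X^{\sim\sim}$ gives, via Theorem \ref{b-property}$(iv)$, that order convergence in $X$ and in $X^{\sim\sim}$ agree on nets from $X$; since $X$ has order continuous norm $X^* = X^{\sim}_n$, and then $\hat X$ is an order dense regular sublattice of $(X^{\sim}_n)^{\sim}_n = (X^*)^{\sim}_n$ which is all of $(X^*)^{\sim}_n$ because any $0\le z\in (X^*)^{\sim}_n$ is the order limit of the net $\{\hat x : 0\le \hat x\le z,\ x\in X\}$, which is order bounded in $X^{\sim\sim}$ hence (by $b$-property) order bounded in $X$, hence order convergent in $X$ to some element mapping to $z$. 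That yields $\hat X = (X^{\sim}_n)^{\sim}$, i.e. $X$ is perfect.

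For $1)\Longrightarrow 2)$: if $X$ is perfect then $X = (X^{\sim}_n)^{\sim} = (X^*)^{\sim}_n$ is an order dual and in particular a Dedekind complete band in its own bidual; a Banach lattice with order continuous norm that is a band in its bidual is a $KB$-space (equivalently, it contains no copy of $c_0$ — the band projection of $X^{\sim\sim}$ onto $X$ restricted to $\hat X$ being surjective rules out $c_0$, which is not a band in $\ell^\infty$). I would then close the loop with $3)\Longleftrightarrow 4)$. For $4)\Longrightarrow 3)$, I would argue the contrapositive: if $X$ lacks $b$-property, then by order continuity of the norm $X$ contains a lattice copy of $c_0$ (this is the standard dichotomy — $KB$-space versus containing $c_0$, and we have just seen $KB\Leftrightarrow b$-property here), and the standard $c_0$-basis is weakly-$uo$-null (indeed norm bounded, disjoint, hence $uo$-null, and $c_0$ embeds weakly nicely) with norms bounded away from $0$, contradicting $4)$. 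For $3)\Longrightarrow 4)$: a weakly $uo$-null sequence $x_n$ with $|x_n|\convw[\mathbb{uo}]0$ — here I should write $|x_n|\ \underset{w}\convuo\ 0$ to match the statement — is in particular weakly convergent, hence norm bounded, hence $\{\hat x_n\}$ is order bounded in $X^{\sim\sim}$ (by reflexivity of order intervals / Nakano), so by $b$-property $\{x_n\}$ is order bounded in $X$; an order bounded $uo$-null sequence is $o$-null, and $o$-null in a lattice with order continuous norm forces $\|x_n\|\to 0$. The one subtlety is passing from ``weakly $uo$-null'' to ``$uo$-null'': I would use that a weakly convergent sequence is norm bounded, and that on norm bounded sets the relevant $uo$-behaviour can be extracted along a subsequence, or invoke directly that weak-$uo$-convergence of $|x_n|$ to $0$ plus order boundedness gives weak convergence to $0$, then use order continuity of the norm on the order-bounded $uo$-null sequence.

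For the last equivalence, under the hypothesis that $X^{\sim}_{uo}$ separates points of $X$: $5)\Longrightarrow 2)$ is classical — a dual Banach lattice with order continuous norm is a $KB$-space (its norm being both order continuous and a dual norm forces it; alternatively $Y^*$ with order continuous norm means $Y$ has weakly compact order intervals and $Y^* = Y^{\sim}_n$ is perfect). The substantive direction is $2)\Longrightarrow 5)$: here I would take $Y := X^{\sim}_{uo}$, which by the discussion preceding Theorem \ref{b-property 3} (and Theorem \ref{b-property 3} itself, giving $X^{\sim}_{bbuo} = X^{\sim}_n$, together with $X^{\sim}_{uo}\subseteq X^{\sim}_{bbuo}$) is a norm-closed ideal in $X^* = X^{\sim}_n$; I then need $X = Y^*$ isometrically as Banach lattices. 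Since $X$ is a $KB$-space it is perfect, $X = (X^{\sim}_n)^{\sim}$, and it remains to identify $(X^{\sim}_n)^{\sim}$ with $(X^{\sim}_{uo})^*$ — i.e. to show every order-continuous functional on $X^{\sim}_n$ restricts faithfully to $X^{\sim}_{uo}$ and conversely extends, which uses that $X^{\sim}_{uo}$ is order dense in $X^{\sim}_n$ when it separates points of $X$ (an ideal that separates points of $X$, where $X$ is perfect with respect to $X^{\sim}_n$, is order dense in $X^{\sim}_n$). The main obstacle I anticipate is precisely this identification $X \cong (X^{\sim}_{uo})^*$ — getting the norms to match and checking that no functionals are lost — rather than the lattice-theoretic cycle $1)$–$4)$, which is a fairly direct assembly of Theorem \ref{b-property}, Corollary \ref{regular embedding 2}, and the classical $KB$/$c_0$ dichotomy.
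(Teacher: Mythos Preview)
Your cycle $1)\Leftrightarrow 2)\Leftrightarrow 3)$ is workable (though more roundabout than the paper, which simply cites \cite[Prop.~2.1]{AAT1} for $2)\Leftrightarrow 3)$ and uses the Nakano characterisation \cite[Thm.~1.71]{AB2} plus the uniform boundedness principle for $1)\Leftrightarrow 2)$), and your treatment of $5)$ is in the right spirit. The genuine gap is in your argument for $3)\Longrightarrow 4)$.

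You write: ``weakly convergent, hence norm bounded, hence $\{\hat x_n\}$ is order bounded in $X^{\sim\sim}$ (by reflexivity of order intervals / Nakano).'' This inference is false. Norm boundedness does \emph{not} imply order boundedness in $X^{\sim\sim}$: take $X=L^2[0,1]$ (reflexive, so $X^{\sim\sim}=L^2$) and $x_n=\sqrt{n}\,\mathbb{1}_{[0,1/n]}$; then $\|x_n\|=1$ for all $n$ but $\{x_n\}$ is not order bounded. So you cannot feed the sequence into the $b$-property, and the rest of the argument (order bounded $uo$-null $\Rightarrow$ $o$-null $\Rightarrow$ norm null) never gets started. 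Your closing remark about ``passing from weakly $uo$-null to $uo$-null'' also suggests a misreading of the hypothesis: $|x_n|\ \underset{w}\convuo\ 0$ already packages \emph{both} $|x_n|\convw 0$ and $|x_n|\convuo 0$, so there is nothing to pass to --- but there is also no order bound to extract.

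The paper proves $2)\Longrightarrow 4)$ by a completely different, direct estimate: using \cite[Thm.~4.18]{AB2} (a consequence of order continuity of the norm), for each $\varepsilon>0$ one finds a single $y'\in X^*_+$ with $(|x'|-y')^+(|x_n|)<\varepsilon$ uniformly over the unit ball of $X^*$, giving $\|x_n\|\le y'(|x_n|)+\varepsilon$; then weak nullity of $|x_n|$ finishes. No order-boundedness of $\{x_n\}$ is ever claimed. For $4)\Longrightarrow 2)$ the paper also argues directly (showing that $b$-bounded disjoint sequences are weakly null via $\sum_{n\le m} x'(|x_n|)\le x'(x'')$, hence norm null by $4)$), rather than through the $c_0$-dichotomy, though your contrapositive via a lattice copy of $c_0$ is a legitimate alternative for that direction.
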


\begin{proof}\
The implication $1)\Longrightarrow 2)$ follows from the Nakano characterization of 
perfect vector lattices \cite[Thm.1.71]{AB2} utilizing the order continuity of the norm in $X$.

$2)\Longrightarrow 1)$: \  We apply Theorem 1.71 of \cite{AB2} once more, taking in account 
that $X^{\sim}_n=X^*$ separates the points of $X$ due to order continuity of the norm in $X$.
So,  let $X\ni x_\alpha\uparrow$ and $\sup_\alpha f(x_\alpha)<\infty$ for each $f\in (X^{\sim}_n)_+$. 
Then $\sup_\alpha f(x_\alpha)<\infty$ for each $f\in X^{\sim}_n=X^*$. 
The uniform boundedness principle ensures that the set $\{x_\alpha\}_\alpha$ is norm bounded.
Since $X$ is a $KB$-space, we derive that $\|x_\alpha-x\|\to 0$ for some $x\in X$. Since $x_\alpha\uparrow$ 
and $\|x_\alpha-x\|\to 0$ then $x_\alpha\uparrow x$ and hence $X$ is perfect.

$2)\Longleftrightarrow 3)$: \ This is Proposition 2.1 of \cite{AAT1}.

$2)\Longrightarrow 4)$: \ 
Let $X$ be a $KB$-space and $x_n$ be a sequence with $|x_n|\ \underset{w}\convuo\ 0$.
Since $X$ has order continuous norm, for each $\varepsilon>0$ and $x\in X_+$, there exists $y'\in X^*_+$ 
with $(|x'|-y')^+ (x)<\varepsilon $ for all $x'\in X^*$ with $\|x'\|\leq 1$ (see, e.g. \cite[Thm.4.18]{AB2}). 
So, for each $\varepsilon>0$, $x_n$, and each $0\leq x'$, $\|x'\|\leq 1$, we have
$$
   x'(|x_n|)\leq [x'\wedge y'](|x_n|)+(|x'|-y')^+(|x_n|)\leq y'(|x_n|)+\varepsilon.
$$
Therefore
$$
   \|x_n\|=\||x_n|\|\leq\sup\{x'(|x_n|): 0\leq x', \ \|x'\|\leq 1\}\leq y'(|x_n|)+\varepsilon.
$$
As $|x_n|\convw 0$, $\lim\sup\|x_n\|\leq 2\varepsilon$ for each $\varepsilon>0$, and hence $\|x_n\|\to 0$.

$4)\Longrightarrow 2)$: \
Since the norm in $X$ is order continuous, for proving that $X$ is a $KB$-space, it is enough to show that
$\|x_n\|\to 0$ for each disjoint sequence $x_n$ satisfying $0\leq|x_n|\leq x''$ for some $x''\in X^{**}$.
Let a sequence $x_n$ in $X$ be disjoint and $0\leq|x_n|\leq x''\in X^{**}$ for all $n$. For each $x'\in X^*_+$, we have
$$
    \sum^{m}_{n=1}x'(|x_n|)=x'\left(\sum^{m}_{n=1}|x_n|\right)=x'\left(\bigvee^{m}_{n=1}|x_n|\right)\leq x'(x''),
$$
and hence $\sum\limits^{\infty}_{n=1}x'(|x_n|)<\infty$. Therefore $x_n\convw 0$. 
Since each disjoint sequence in $X$ is $uo$-null, it follows from $4)$ that $\|x_n\|\to 0$.

$5)\Longrightarrow 1)$: \  This holds since the order dual of every vector lattice is  perfect 
(cf. \cite[Ex.3,p.74]{AB2}) and since $X=Y^*=Y^{\sim}$. Note that proving  
this implication we did not use that $X^{\sim}_{uo}$ separates points of $X$.

$2)\Longrightarrow 5)$: \  
The proof of this implication is just a combination of several results of paper \cite{GLX}. 
By Theorem 2.3 of \cite{GLX}, $X^{\sim}_{uo}$ is the Banach lattice $(X^{\sim}_n)^a$ that is 
the order continuous part of $X^{\sim}_n$. Since $X$ is a $KB$-space, $X$ is monotonically complete. 
Applying Theorem 3.4 of \cite{GLX} gives that $X$ is lattice isomorphic to the dual space 
$(X^{\sim}_{uo})^*$ under $i(x)(y)=y(x)$ for $x\in X$, $y\in X^{\sim}_{uo}$. Since both $X$ and 
$(X^{\sim}_{uo})^*$ are Banach lattices, the bijection $i:X\to (X^{\sim}_{uo})^*$ is also a homeomorphism. 
As it was pointed out in \cite{GLX} after the proof of \cite[Thm.3.4]{GLX}, $i$ is an isometry 
iff the closed unit ball $B_X$ is order closed. The later is clearly true since $X$ is a $KB$-space. 
So, $X$ is lattice isometric to the dual space $(X^{\sim}_{uo})^*$.
\end{proof}

\noindent
Notice that the condition 4) of Theorem \ref{KB-spaces} cannot be replaced by$:$  
\begin{enumerate}
\item[$4')$] \ for every sequence $x_n$ in $X$, $x_n\ \underset{w}\convuo\ 0$ implies $\|x_n\|\to 0$,
\end{enumerate}
because, due to Theorem 3.11 of \cite{GX}, the condition $4')$ is equivalent to the positive Schur property, 
which is, in general, stronger than $KB$.

Clearly $X^{\sim}_{uo}$ separates points of $X$ if the Banach lattice $X$ is atomic. Another case when 
$X^{\sim}_{uo}$ separates points of $X$ is described in Lemma 2.2 of \cite{TL}. Taking these 
two cases together, we get immediately from Proposition \ref{KB-spaces} the following characterization. 

\begin{corollary}\label{KB-spaces for atomic BL and for ri-space}
Let $X$ be a Banach lattice with order continuous norm. If $X$ is either atomic or else a rearrangement 
invariant space on a non-atomic probability space such that $X$ is not an $AL$-space 
then the following conditions are equivalent.
\begin{enumerate}
\item[$1)$] \ $X$ is perfect.
\item[$2)$] \ $X$ is a $KB$-space. 
\item[$3)$] \ $X$ has $b$-property.
\item[$4)$] \ $X=Y^*$ for some Banach lattice $Y$.
\end{enumerate}
Furthermore, in this case, $X$ is lattice isometric to $(X^{\sim}_{uo})^*$. 
\end{corollary}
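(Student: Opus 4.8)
The plan is to deduce Corollary \ref{KB-spaces for atomic BL and for ri-space} directly from Theorem \ref{KB-spaces}, so essentially all the work is in verifying that the two classes of spaces in the hypothesis do satisfy the extra condition ``$X^{\sim}_{uo}$ separates points of $X$,'' after which Theorem \ref{KB-spaces} supplies the equivalence of its conditions $1)$ through $5)$, and condition $4)$ of the Corollary is just condition $5)$ of the Theorem.

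First I would handle the atomic case. If $X$ is atomic, then for each atom $a$ the coordinate functional $f_a$ (the unique positive functional supported on the band generated by $a$, normalized so that $f_a(a)=1$) lies in $X^{\sim}_{uo}$: indeed $f_a$ takes norm bounded $uo$-null nets to null nets because, along the one-dimensional band at $a$, $uo$-convergence of a net coincides with convergence of its $a$-coordinate, and a norm bounded net cannot have an unbounded $a$-coordinate. Since the atoms generate a band which is all of $X$ (atomicity), the family $\{f_a\}$ separates points of $X$, so $X^{\sim}_{uo}$ separates points of $X$. This is exactly the remark made in the text just before the Corollary; I would state it in a line or two with the above justification.

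Next I would treat the rearrangement invariant case. Here I would simply invoke Lemma 2.2 of \cite{TL}: for a rearrangement invariant space $X$ on a non-atomic probability space which is not an $AL$-space, $X^{\sim}_{uo}$ separates the points of $X$. (The $AL$-space exclusion is needed precisely because $L^1$ has $(L^1)^{\sim}_{uo}=\{0\}$, as recorded in Example \ref{duals}$(d)$.) With the separation hypothesis verified in both cases, Theorem \ref{KB-spaces} applies verbatim and gives $1)\Leftrightarrow 2)\Leftrightarrow 3)\Leftrightarrow 5)$ in the notation of that theorem; relabelling $5)$ as $4)$ yields the stated list. Finally, the last sentence of the Corollary --- that $X$ is lattice isometric to $(X^{\sim}_{uo})^*$ --- is not a separate claim to prove but is read off from the proof of implication $2)\Longrightarrow 5)$ in Theorem \ref{KB-spaces}, where the lattice isometry $i:X\to(X^{\sim}_{uo})^*$, $i(x)(y)=y(x)$, is constructed explicitly.

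The only real obstacle is that the separation property for rearrangement invariant spaces is genuinely external input: it rests on \cite[Lem.2.2]{TL} and I would not reprove it. Apart from that citation, the Corollary is a pure specialization of Theorem \ref{KB-spaces}, so no new argument is required; I would keep the write-up to a short paragraph that (i) checks separation in the atomic case, (ii) cites \cite{TL} for the r.i.\ case, and (iii) notes that the isometry statement comes from the proof of Theorem \ref{KB-spaces}.
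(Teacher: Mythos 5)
Your proposal is correct and follows the paper's own route: the authors likewise observe that $X^{\sim}_{uo}$ separates points of $X$ in the atomic case and invoke Lemma 2.2 of \cite{TL} for the rearrangement invariant case, then read the Corollary off from Theorem \ref{KB-spaces}, with the final isometry statement extracted from the proof of the implication $2)\Longrightarrow 5)$ there. Your added justification for the atomic case (coordinate functionals of atoms lie in $X^{\sim}_{uo}$ and generate a separating family) is a harmless elaboration of what the paper states without proof.
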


The following result is similar to Theorem 2.3 of \cite{GLX}, that characterizes the dual of $X^{\sim}_{uo}$. 
Unlike in Theorem \ref{b-property 3}, $X^{\sim}_n$ is not required to be separating points of $X$. 

\begin{theorem}\label{bbuo dual}
Let $y$ be an order continuous functional on a Banach lattice $X$. The following conditions are equivalent$:$
\begin{enumerate}
\item[$1)$] \ $y\in X^{\sim}_{bbuo}$. 
\item[$2)$] \ $y(x_n)\to 0$ for each $b$-bounded  $uo$-null sequence $x_n$ in $X$.
\item[$3)$] \ $y(x_n)\to 0$ for each $b$-bounded disjoint sequence in $X$.
\end{enumerate}
\end{theorem}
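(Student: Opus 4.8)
The plan is to prove the cycle $(1)\Rightarrow(2)\Rightarrow(3)\Rightarrow(1)$, the last implication being the only substantive one. For $(1)\Rightarrow(2)$, if $(x_n)$ is $b$-bounded and $uo$-null, then $(\widehat{x_n})$ is order bounded in $X^{\sim\sim}$, hence certainly eventually order bounded, so $x_n\convbbuo 0$ and $y(x_n)\to0$. For $(2)\Rightarrow(3)$, one only needs the standard fact that every disjoint sequence in an Archimedean vector lattice is $uo$-null \cite{GTX}; a $b$-bounded disjoint sequence is thus a $b$-bounded $uo$-null sequence, so $(2)$ applies.

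For $(3)\Rightarrow(1)$ I would first note that the set $E$ of order continuous functionals satisfying $(3)$ is a linear subspace of $X^{\sim}_n$ closed under $z\mapsto z^+$: if $z\in E$ and $(x_n)\subseteq X_+$ is a $b$-bounded disjoint sequence with $z^+(x_n)\ge\varepsilon$, the Riesz--Kantorovich formula yields $0\le u_n\le x_n$ with $z(u_n)\ge\varepsilon/2$, and $(u_n)$ is again a $b$-bounded disjoint sequence (since $0\le\widehat{u_n}\le\widehat{x_n}$) with $z(u_n)\not\to0$, contradicting $z\in E$. Since $X^{\sim}_{bbuo}$ is a subspace, it suffices to show $E_+\subseteq X^{\sim}_{bbuo}$. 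So fix $y\in E$ with $y\ge0$ and let $x_\alpha\convbbuo 0$; replacing $x_\alpha$ by $|x_\alpha|$ (still $bbuo$-null because $\widehat{|x_\alpha|}=|\widehat{x_\alpha}|$, and $|y(x_\alpha)|\le y(|x_\alpha|)$) we may assume $x_\alpha\ge0$, $x_\alpha\convuo 0$, and $0\le\widehat{x_\alpha}\le z\in X^{\sim\sim}_+$ for all $\alpha$. As the canonical embedding is isometric, $b$-boundedness already gives $\sup_\alpha\|x_\alpha\|\le\|z\|$ and $\sup_\alpha y(x_\alpha)=\sup_\alpha\widehat{x_\alpha}(y)\le z(y)=:M<\infty$. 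Suppose $y(x_\alpha)\not\to0$ and pass to the cofinal subnet on which $y(x_\alpha)\ge\varepsilon>0$ (still $uo$-null, still $\le z$). Since $x_\alpha\wedge(ku)\le k(x_\alpha\wedge u)\convo 0$ for every $u\in X_+$ and $k\in\mathbb{N}$, order continuity of $y$ gives $y(x_\alpha\wedge ku)\to0$ for all such $u,k$.

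Now I would run the standard disjointification construction (as in the proof of \cite[Thm.\ 2.3]{GLX}; cf.\ \cite{GTX}): recursively produce indices $\alpha_n$ and a disjoint sequence $(w_n)\subseteq X_+$ with $y(w_n)\ge\varepsilon/2$ and $w_n$ built from $x_{\alpha_n}$ — at stage $n$, with $p:=w_1\vee\cdots\vee w_{n-1}$ already chosen, the vanishing $y(x_\alpha\wedge k p)\to0$ lets one pick $k_n$ and $\alpha_n$ far out in the net so that $y(x_{\alpha_n}\wedge k_n p)$ is negligible, and one extracts $w_n$ from $(x_{\alpha_n}-k_n p)^+$. The point is that the two uniform bounds $\|x_\alpha\|\le\|z\|$ and $y(x_\alpha)\le M$ furnished by $b$-boundedness make it possible to keep $(w_n)$ not merely norm bounded but $b$-order bounded: the fragments one must discard to force exact pairwise disjointness can be arranged to be $y$-small and norm-summable in $n$, so their accumulated total is dominated by the supremum of a norm-convergent increasing series, which exists because the dual lattice $X^{\sim\sim}$ is monotonically complete; hence $0\le\widehat{w_n}\le z+(\text{that supremum})$. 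Then $(w_n)$ is a $b$-bounded disjoint sequence with $y(w_n)\not\to0$, contradicting $(3)$; therefore $y(x_\alpha)\to0$ and $y\in X^{\sim}_{bbuo}$.

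The main obstacle is exactly this disjointification: manufacturing from a $bbuo$-null net an honestly disjoint sequence that stays $b$-order bounded (not just norm bounded), which is more delicate than the norm-bounded $uo$ case treated in \cite{GLX}. This is also where all the hypotheses are genuinely used — order continuity of $y$ to push it through the truncations $x_\alpha\wedge ku$, the isometric and order estimates from $b$-boundedness to control the discarded fragments, and the monotone completeness of $X^{\sim\sim}$ to absorb their sum.
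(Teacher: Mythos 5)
Your implications $1)\Rightarrow 2)$ and $2)\Rightarrow 3)$ are fine and match the paper. The problem is $3)\Rightarrow 1)$: the entire weight of your argument rests on the disjointification step, and that step is not actually carried out --- it is precisely the point where the construction breaks down. Setting $p=w_1\vee\dots\vee w_{n-1}$ and forming $(x_{\alpha_n}-k_np)^+$ does not produce an element disjoint from $p$ (take $x_{\alpha_n}=2k_np$: then $(x_{\alpha_n}-k_np)^+=k_np$), so ``one extracts $w_n$ from $(x_{\alpha_n}-k_np)^+$'' conceals the whole difficulty. The classical disjointification lemma (e.g.\ \cite[Lem.~4.35]{AB2}) achieves exact pairwise disjointness only by subtracting a term $2^{-n}e$ where $e$ is a common order upper bound of the sequence \emph{in the space itself}; here the only available bound is $z\in X^{\sim\sim}$, so the disjoint pieces land in $X^{\sim\sim}\setminus\hat X$ and cannot be fed into hypothesis $3)$. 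Your proposed repair --- that the discarded fragments are ``$y$-small and norm-summable'' and their sum is absorbed using monotone completeness of $X^{\sim\sim}$ --- is an assertion, not a construction: it neither specifies the fragments nor explains how controlling them in norm forces the $w_n\in X$ to be \emph{exactly} disjoint. As written, the proof has a genuine gap at its only substantive step.

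For comparison, the paper avoids disjointifying the net altogether. It works with the solid set $A=\{x\in X:|\hat x|\le z\}$ and observes that any disjoint sequence $(w_n)$ in $A$ is order bounded by $z$ in $X^{\sim\sim}$, whence $\sum_{n\le m}f(|\hat w_n|)=f\bigl(\bigvee_{n\le m}|\hat w_n|\bigr)\le f(z)$ for $f\ge 0$, so $(\hat w_n)$ is weakly null and in particular $|y|(w_n)\to 0$. Then Theorem 4.36 of \cite{AB2}, applied to the norm continuous seminorm $p(x)=|y|(|x|)$ on the norm bounded solid set $A$, yields for each $\varepsilon>0$ some $u\in X_+$ with $|y|\bigl((|x|-u)^+\bigr)<\varepsilon$ on $A$; writing $|x_\alpha|=|x_\alpha|\wedge u+(|x_\alpha|-u)^+$ and using $|x_\alpha|\wedge u\convo 0$ together with order continuity of $y$ finishes the proof. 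If you want to keep a disjoint-sequence extraction in the picture, note that it is already packaged inside the equivalence of Theorem 4.36 --- that is the tool that converts ``$|y|$ kills disjoint sequences of $A$'' into the uniform almost-order-boundedness estimate, and it is the standard substitute for the construction you were unable to complete.
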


\begin{proof}
$1)\Longrightarrow 2) \Longrightarrow 3)$ \ are clear. 

$3)\Longrightarrow 1)$: \ Suppose $y\in X^{\sim}_n$ satisfies 
$y(x_n)\to 0$ for each eventually $b$-bounded disjoint sequence in $X$. 
Let $x_\alpha$ be an eventually $b$-bounded and $uo$-null net in $X$. 
We show $y(x_\alpha)\to 0$. Without lost of generality, we assume the net $x_\alpha$ 
to be $b$-bounded itself, say $-z\leq \hat{x}_\alpha\leq z\in X^{\sim\sim}$. 
Let $A$ be the solid hull of $[-z,z]\cap\hat{X}$ in $X^{\sim\sim}$. 
Clearly, $A\subseteq [-z,z]$. Each disjoint sequence in $A$ is a disjoint 
sequence in $[-z,z]$ and therefore weakly converges to zero. 
So we see that, for each disjoint sequence $x_n$ in $A$, $|y|(x_n)\to 0$. 
Now applying this observation to Theorem 4.36 of \cite{AB2} for the norm 
continuous seminorm $p(x)=|y|(|x|)$, we see that, for $\varepsilon>0$, 
there exists $u\in X_+$ such that
$$
  p((|x|-u)^+)<\varepsilon  \ \ \   \text{ for all} \ \ \  x\in A.
$$
Hence, 
$$
  \sup_{x\in A}|y|\left(|x|-|x|\wedge u\right)=\sup_{x\in A}|y|\left(|x|-u\right)^+\leq \varepsilon.
$$
Recalling the equality $|x|=|x|\wedge u +(|x|-u)^+$ and utilizing the fact $x_\alpha\convuo 0$, 
from $|x_\alpha|\wedge u\convo 0\Rightarrow |y|(|x_\alpha|\wedge u)\to 0$ we have
$$  
  \lim\sup_\alpha |y|(|x_\alpha|)\to 0.
$$
Since $\varepsilon$ is arbitrary, $|y(x_\alpha)|\leq|y|(|x_\alpha|)\to 0$ as desired.
\end{proof}

{\tiny 

}

\begin{thebibliography}{30}
\bibitem{AB1}
Aliprantis, C.D., Burkinshaw, O.: 
Locally Solid Riesz Spaces with Applications to Economics, 2nd edition. 
American Mathematical Society, Providence, RI (2003)

\bibitem{AB2}
Aliprantis, C.D., Burkinshaw, O.: 
Positive Operators. 
Springer, Dordrecht (2006)

\bibitem{AAT1}
Alpay S, Altin B, Tonyali C.: 
On Property (b) of Vector Lattices. 
Positivity 7, 135-139 (2003)

\bibitem{AAT2}
Alpay S, Altin B, Tonyali C.: 
A note on Riesz spaces with property-b.
Czechoslovak Math. J. 56(131), no.2, 765-772 (2006)

\bibitem{DEM}
Dabboorasad, Y.A., Emelyanov, E.Y., Marabeh, M.A.A.: 
Order convergence is not topological in infinite-dimensional vector lattices.
Uzb. Mat. Zh., 159-166 (2020)

\bibitem{GX}
Gao, N., Xanthos, F.: 
Unbounded order convergence and application to martingales without probability.
J. Math. Anal. Appl. 415(2), 931-947 (2014)

\bibitem{GTX}
Gao, N., Troitsky, V., Xanthos, F.: 
Uo-convergence and its applications to Ces{\'a}ro means in Banach lattices.
Isr. J. Math. 220, 649-689 (2017)

\bibitem{GLX}
Gao, N., Leung, D.H., Xanthos, F.: 
Duality for unbounded order convergence and applications. 
Positivity 22(3), 711-725 (2018)

\bibitem{G}
Gorokhova, S.G.:
Intrinsic characterization of the space $c_0(A)$ in the class of Banach lattices. 
Math. Notes 60, 330-333 (1996)

\bibitem{TL}
Tantrawan, M., Leung, D.H.:
On closedness of law-invariant convex sets in rearrangement invariant spaces.
Arch. Math. 114, 175-183 (2020)
\end{thebibliography}
\end{document}